\newcommand{\newjointcountertheorem}[3]{
	\newaliascnt{#1}{#2}
	\newtheorem{#1}[#1]{#3}
	\aliascntresetthe{#1}	
}
\def\a{{\alpha}}
\def\d{{\delta}}
\def\s{{\sigma^2}}
\def\phi{\varphi}
\def\eps{{\varepsilon}}
\def\qed{\hfill\vrule height5pt width5pt depth0pt}
\newcommand{\black}{\color{black}}
\newcommand{\Z}{\mathbb{Z}}
\newcommand{\EE}{\mathbb{E}}
\newcommand{\PP}{\mathbb{P}}
\newtheorem{thm}{Theorem}[section]
\theoremstyle{definition}
\def\Snospace~{\S{}}
\renewcommand{\mathbb}{\mathbf}
\newcommand{\lra}{\longrightarrow}
\newcommand{\var}{{\mathrm{Var}}}
\numberwithin{equation}{section}
\begin{document}
\title[{Small drift limit theorems for occupation times of random walks}]{Small drift limit theorems for random walks}

	\author{Ernst Schulte-Geers and Wolfgang Stadje}
\address{Bundesamt f\"ur Sicherheit in der Informationstechnik (BSI), Godesberger Allee 185--189, 53175 Bonn, Germany} 
	\email{ernst.schulte-geers@bsi.bund.de}
	\address{Institute of Mathematics, University of Osnabr\"{u}ck, 49069 Osnabr\"{u}ck, Germany}
    \email{wstadje@uos.de}
\keywords{ Random walk; transient; occupation time; arcsine law; small drift; limit distribution.}

\begin{abstract}
We show analogs of the classical arcsine theorem for the occupation time  of a random walk in $(-\infty,0)$ in the case of a small positive drift. 
To study the asymptotic behavior of the total time spent in  $(-\infty,0)$ we consider parametrized classes of random walks, where the convergence of the parameter to zero implies the convergence of the drift to zero. 
We begin with  shift families, generated by a centered random  walk by adding to each step a shift constant $a>0$ and then letting $a$ tend to zero. Then we study families of associated distributions. In all cases we arrive at the same limiting distribution, 
which is the distribution of the time spent below zero of a standard Brownian motion with drift 1. For shift families this is explained by a functional limit theorem. Using fluctuation-theoretic formulas we derive the generating function of the occupation time  in closed form, which provides an alternative approach. 
 In the course also give a new form of the first arcsine law for the Brownian motion with drift.

{\it 2010 Mathematics Subject Classification}: 60G50, 60F05. 
	
\end{abstract}

	\maketitle

	
\section{Introduction}
For the classical symmetric random walk with $\pm 1$ steps it is well known that the three random variables ``time spent on the positive axis", ``position of the first maximum"
and ``last exit from zero'' are identically distributed and (suitably normalized)  asymptotically arcsine-distributed. 
Here the norming factor is the length of the time interval the random walk has been observed, so that the limiting statements refer to ``relative" times. 

Consider now a classical random walk with drift $\d\neq 0$. Clearly the same ``relative" variables can be studied. The asymptotic distribution of the random variable 
``(fraction of) time spent in $(-\infty,\a]$
has been determined by Tak\'acs \cite{Tak}, by applying a functional limit theorem. 

But if $\d\neq 0$ there is also another, ``absolute'' perspective. If for example $\d > 0$ for a general random walk, 
it is clear that $Z(\d)=$ ``number of visits in $(-\infty,0)$ ''  is almost surely finite, and that $Z(\d)\lra \infty$ in probability as $\d\searrow 0$.
One may ask  if $Z(\d)$, after multiplication with some deterministic function $a(\d )$, has a non-degenerate limit distribution. 
This paper aims to answer these and related questions for random walks in the heavy-traffic regime, i.e., when the drift converges to zero. \black  In all cases the limiting distribution for the occupation time in $(-\infty, 0)$, properly rescaled, turns out to have the density 
\begin{align}
\label{density} 
p(t)=2\,{\phi(\sqrt{2t})\over \sqrt{2t}}-2\,\Phi(-\sqrt{2t}),\ \ t>0 
\end{align} 
where $\phi$ and $\Phi$ are the density and the distribution function of $N(0,1)$, respectively. 

The distribution of the occupation time  in $(-\infty,0)$ \black of Brownian motion with  positive \black drift also has density (\ref{density}), and  in Section 2 
we begin with related results for Brownian motion. We  show \black for example  that \black the  distribution of the  time of the last exit from $0$ \black
of Brownian motion with drift
during a finite time interval is composed of the arcsine and a truncated exponential distribution. 
In Section 2 we derive the limiting occupation time distribution for shift families generated from a centered random  walk by adding to each step a shift constant $a>0$ and then letting $a$ tend to zero. The proof that (\ref{density}) gives the asymptotic distribution is based on Donsker's invariance principle. 
In Section 4 we give the key fluctuation-theoretic formulas for the distribution for the occupation time in $(-\infty, 0)$ for general random walks. 

The arcsine law and its ramifications are a classical topic but there are always recent contributions, for example some new explicit distributions \cite{L}, new proofs 
\cite{Hof}, or asymptotic considerations \cite{Mar}. Interesting results on the number of visits to one point by skipfree random walks and related questions can be found in \cite{Ross}. The problem considered in this paper is also connected to the heavy traffic approximation problem in queueing theory, in which the growth of  the all-time maximum of $S_n-na$ (where $S_n$ is the $n$th partial sum of iid random variables with mean zero) is studied as $a\searrow 0$. In the queueing context this is equivalent to the growth of the steady-state waiting time in a $GI/G/1$ system when the traffic load tends to 1. 
This question was first posed by Kingman (see \cite{Kin}) and was investigated by many authors (e.g. \cite{BoxCoh, Kos, Pro, ResSam, Shnwa}). 

\section{Occupation times and last exit from $0$ \black for Brownian motion with drift}
We start by presenting two results on occupation times for Brownian motion  with positive drift $\d>0$ and variance $\s$, one known and one new. 
Let $B_t$ be a standard Brownian motion and $X_t=\sigma B_t+\d t$.

\begin{lem}\label{lem}  
(1) Let $z>0$ and $T_z=\inf\{ t\geq 0\;:\;X_t\geq z\}$ be the first time when $X_t$ reaches level $z$.
Then $T_z$ has Laplace transform
$$\ell_{T_z}(s)=\EE e^{-sT_z}=\exp\left(-{ z \over \s}(\sqrt{\d^2+2\s s}-\d)\right).$$
(2) Let $V_0=V_0(\d)=\int_{0}^\infty 1_{(-\infty,0)}(X_t)\,dt$ be the total time that $X_t$ spends below zero.
Then $V_0$ has Laplace transform
$$\ell_{V_0}(s)=\EE e^{-sV_0}={ 2\d \over \d +\sqrt{\d^2+2\s s}}.$$
\end{lem}
Proofs for (1) resp. (2) (for $\s=1$) can be found in \cite{KarT} resp. \cite{Imh}. 
Note $(\d^2/2\s) V_0$ has the Laplace transform $ 2/(1 +\sqrt{1+ s})$. We call $A$ a generic random variable with this  
Laplace transform. 

The density of $A$ is given by (\ref{density}). 
To see this, note that $1 / \sqrt{1+s}$ is the Laplace transform of the gamma distribution $\Gamma_{1,{1\over 2}}$, which has density 
$$\gamma_{1,{1\over 2}}(t)=1_{(0,\infty)}(t)\,{e^{-t} \over \sqrt{\pi t}}.$$
Therefore $[1-(1 / \sqrt{1+s})]/s$ is the Laplace transform of $1-\Gamma_{1,{1\over 2}}(t)=\int_t^\infty \gamma_{1,{1\over 2}}(x)\,dx$. The equality 
$${1 \over 1+\sqrt{1+s}}={1 \over \sqrt{1+s} }- {1 \over s}\Big(1-{1\over \sqrt{1+s}}\Big)$$
now yields density (\ref{density}). 

For $z\geq 0$ let $V_z = 
\int_{0}^\infty 1_{(-\infty,z)}(X_t)\,dt$ the total time the process spends below $z$.
Then the obvious decomposition (obtained by conditioning on $T_z$ ) $V_z=T_z+V_0^\prime$ (where $V_0^\prime $ is independent of  $T_z$ and dsitributed as $V_0$) yields

\begin{lem}\label{lem2} $V_z$ has Laplace transform
$$\ell_{V_z}(s)=\EE(e^{-sV_z})=\ell_{T_z}(s)\,\ell_{V_0}(s).$$
\end{lem}
The density and distribution function are given in \cite{Imh}.\\

We focus in the sequel on the time spent on  the negative axis, but it is also of interest to  look at the other classical arcsine variable, i.e., the 
time of the last exit from $0$. Here we determine its distribution. \black   
 Let $\d \in\mathbb{R}\setminus\{0\}, \s =1$, so that $X_t=B_t+\d t$, and consider  $W=\sup\{t\in [0,1]\,:\,X_t=0\}$, the last time
$X_t$ visits $0$ in $[0,1]$. 

Recall that for $\d=0$, i.e., for the standard Brownian motion, the standard arcsine distribution (which has density $1_{(0,1)}(t)(1 / \sqrt{t(1-t)})$ 
and distribution function $(2 / \pi)\arcsin(\sqrt{t})$ on $[0,1]$) is the distribution of the last exit time from zero in the interval $[0,1]$. 

\black The distribution of $W$ turns out to have a nice representation in terms of the standard arcsine distribution
and a truncated exponential distribution.  
As this result seems new, we provide a proof. 
 
\begin{thm}\label{thmle} $W\stackrel{d}{=}C\cdot \min \{1,D_\d\}$ where $C$ and $D_\d$ are independent, $C$ is arcsine-distributed, and $D_\d$ is $\exp(\d^2/2)$-distributed. 
The moments of $W$ are given by 
$$\mathbb{E}W^k=  {2k \choose k}{1 \over 2^{2k}}\,\int_0^1 ky^{k-1} e^{-\d^2y/2}\,dy, \  \ k\ge 1.$$
%
\end{thm}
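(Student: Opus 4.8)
The plan is to compute the distribution function of $W$ directly, using the strong Markov property at time $W$ together with the reflection/excursion structure of $X_t = B_t + \d t$ on $[0,1]$. First I would recall the classical decomposition for the last zero of a drifted Brownian motion on a finite interval: conditioning on $\{X_s = 0\}$ and on the event that there is no further return to $0$ in $(s,1]$, one writes
$$
\PP(W \in ds) = f_{X}(0; s)\, \PP_{0}\big(X_t \neq 0 \text{ for all } t \in (0, 1-s]\big)\, ds,
$$
where $f_X(0;s)$ is the density at $0$ of $X_s$. For $X_s = B_s + \d s$ we have $f_X(0;s) = \frac{1}{\sqrt{2\pi s}}\exp(-\d^2 s/2)$, and the no-return probability from $0$ over a time-window of length $u = 1-s$ is, by the standard formula for drifted Brownian motion, $\PP_0(T_0 > u)$ where $T_0$ is the first return to $0$; this equals $\int_0^{\sqrt u}\cdots$ — concretely it is known that $\PP_0(T_0 > u) = 2\Phi(\d\sqrt{u}) - 1$ plus the symmetric contribution, which when both signs of the drift are accounted for gives a clean expression. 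Assembling these and simplifying is the computational heart of the argument.

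The cleaner route, which I would actually write up, is to identify the two factors in the claimed product $C \cdot \min\{1, D_\d\}$ separately. Observe that the event $\{W \le t\}$ for $t \in [0,1]$ decomposes according to whether the last zero occurs in $[0,t]$; scaling the drifted Brownian motion by Brownian scaling ($B_{ct} \deq \sqrt{c}\,B_t$ turns drift $\d$ on $[0,1]$ into drift $\d\sqrt{c}$ on $[0,1]$ after rescaling time by $c$) suggests writing $W$ as a product of a scale-invariant arcsine piece and a drift-dependent truncation. Precisely, I would show
$$
\PP(W \le t) = \PP\big(C \le t,\ C D_\d \le t\big) = \PP\big(C \le t\big) - \int_0^t \PP(D_\d < t/c)\,\PP(C \in dc)\quad(\text{for }t<1),
$$
and check that the right-hand side matches the density obtained from the first paragraph, namely a constant multiple of $\frac{1}{\sqrt{t(1-t)}}$ corrected by the exponential weight $e^{-\d^2(1-t)/2}$ coming from $f_X$ and the no-return probability. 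The identity to verify reduces, after substituting the arcsine density, to a one-dimensional integral identity between $\int$-expressions in $\arcsin\sqrt{t}$ and incomplete Gaussian integrals; with $\d = 0$ both sides must collapse to the plain arcsine law, which is a useful sanity check and also fixes all normalizing constants.

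For the moment formula I would simply integrate $\EE W^k = \EE[(C\min\{1,D_\d\})^k] = \EE[C^k]\,\EE[\min\{1,D_\d\}^k]$, using independence. The first factor is the classical arcsine moment $\EE[C^k] = \binom{2k}{k} 2^{-2k}$ (immediate from the Beta$(1/2,1/2)$ form of the arcsine law, or from $\int_0^1 t^{k-1/2}(1-t)^{-1/2}\,dt$). For the second factor, $\EE[\min\{1,D_\d\}^k] = \int_0^\infty \min\{1,x\}^k\,\frac{\d^2}{2}e^{-\d^2 x/2}\,dx$; integrating by parts (or writing $\min\{1,x\}^k = \int_0^{\min\{1,x\}} k y^{k-1}\,dy$ and swapping the order of integration) turns this into $\int_0^1 k y^{k-1}\,\PP(D_\d \ge y)\,dy = \int_0^1 k y^{k-1} e^{-\d^2 y/2}\,dy$, which is exactly the stated integral. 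Multiplying the two factors gives the displayed formula.

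The main obstacle is the first paragraph's distributional identity: correctly pinning down the "no further return to $0$" probability for the drifted process over a finite horizon and showing that the resulting density factors as the product of the arcsine density and the truncated-exponential tail. This requires care with the two-sidedness of excursions (the walk can return to $0$ from above or below, and the drift breaks the symmetry), and with the normalization. Once that density is in hand, matching it to $C\cdot\min\{1,D_\d\}$ is a routine change of variables, and the moment computation is then immediate.
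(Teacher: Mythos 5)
Your proposal has a genuine gap at its core: the distributional identification $W\stackrel{d}{=}C\cdot\min\{1,D_\d\}$ is never actually established. The decomposition you lead with, $\PP(W\in ds)=f_X(0;s)\,\PP_0(X_t\neq 0\text{ for }t\in(0,1-s])\,ds$, is ill-posed as written: the point $0$ is regular for itself for $B_t+\d t$, so the process started at $0$ returns to $0$ immediately a.s.\ and $\PP_0(T_0>u)=0$ for every $u>0$ (the quantity $2\Phi(\d\sqrt{u})-1$ you quote is not this probability). Making this rigorous requires either the It\^o excursion measure of the drifted Brownian motion (with the correct normalization so that $p_s(0,0)\,n(\zeta>1-s)$ reproduces the arcsine density when $\d=0$), or a discretization, neither of which you carry out; you yourself flag this as ``the main obstacle,'' but it is precisely the content of the theorem. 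There is also a concrete error in your second paragraph: since $C\le 1$ a.s., $\{C\min\{1,D_\d\}\le t\}=\{\min\{C,CD_\d\}\le t\}=\{C\le t\}\cup\{CD_\d\le t\}$, the \emph{union}, not the intersection $\PP(C\le t,\ CD_\d\le t)$ that you write; e.g.\ $C=2t$, $D_\d=1/10$ gives $W\le t$ but $C>t$. Your third paragraph (the moment formula for the product, via $\EE C^k=\binom{2k}{k}2^{-2k}$ and the layer-cake computation of $\EE\min\{1,D_\d\}^k$) is correct, but it only computes the moments of the candidate limit, not of $W$.

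For contrast, the paper sidesteps the excursion-theoretic difficulty entirely by a random-walk approximation in the style of Tak\'acs: it takes the simple walk with $p=\tfrac12+\tfrac{\d}{2\sqrt n}$, notes that the last zero is a functional continuous a.s.\ in the Skorohod topology so that $M_N/N\to W$ in distribution, writes the exact generating function of $M_N$ as a coefficient extraction from $\bigl(1-4pqt^2z^2\bigr)^{-1/2}\sqrt{1-4pqz^2}/(1-z)$, computes the $k$-th factorial moments in closed form, and passes to the limit via a Poisson central-limit estimate, concluding by moment determinacy. If you want to salvage your route, the cleanest repair is the standard conditioning $\PP(W\le t)=\int f_{X_t}(x)\,\PP_x(T_0>1-t)\,dx$ with $x\neq 0$ (where the first-passage probability is genuinely positive), followed by an explicit evaluation of that integral; as it stands, your argument does not go through.
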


\begin{proof} We use a random walk approximation in the style of Tak\'acs \cite{Tak}.
Let $Y_1,Y_2,\ldots$ be iid with $$\PP(Y_i=1)=p={1 \over 2}+{\d \over 2\sqrt{n}}, \ \ \PP(Y_i=-1)=q=1-p$$ 
($p$ and $q$ depend on $n$, but this is suppressed in the notation) and partial sums
$S_0=0,\,S_k=\sum_{i=1}^k Y_i$. 

It it easy to see that the  processes $X^{(n)}$ defined by 
$$X^{(n)}(t)={1\over \sqrt{n}} S_{\lfloor nt \rfloor}, \ \ 0\le t \le 1 $$ 
converge in distribution to $X=(X_t)_{t\in [ 0,1]}$ in $D[0,1]$. 

Furthermore, the last-exit time from $0$ is continuous in the Skorohod topology on $D[0,1]$ 
on a set of $P_X$-measure 1, and
 $$T_n=\sup\{t\in[0,1]\;:\;X^{(n)}(t)=0\} ={1\over n} \max\{0\leq k\leq n\;:\; S_k=0\} =:{M_n / n}\black$$
 Then it suffices to show that ${M_N/ N}\longrightarrow C\cdot \min\{1,D_\d\}$ as $N\longrightarrow \infty$. 
 
 Since ${1/ \sqrt{1-4pqz^2}}$ and $(\sqrt{1-4pqz^2})/(1-z)$ are the generating functions for the sequences of probabilities $\PP(S_n=0)$ and $\PP(S_1\not = 0,\ldots,S_n\not = 0)$, \black
 respectively, 
the generating function of $M_N$ is 
\begin{align*}
\mathbb{E}t^{M_N}  &\ =\sum_{k=0}^N t^{k}\, \PP(S_k=0, S_{k+1} \not=0,\ldots,S_{N}\not=0)\\
                   & =\sum_{k=0}^N t^{k}\, \PP(S_k=0)\,\PP(S_{1} \neq 0,\ldots,S_{N-k}\neq 0)\\ 
                   &=[z^{N}] {1\over \sqrt{1-4pqt^2z^2}}{\sqrt{1-4pqz^2} \over 1-z}\\
                   &=[z^{N}] {1\over \sqrt{1-4pqt^2z^2}}{\sqrt{1-4pqz^2} \over 1-z^2}(1+z). 
\end{align*}
(Here and in the following $[z^{N}] f(z)$ denotes the coefficient of $[z^{N}]$ in the Taylor expansion of the function $f(z)$ around zero.) 
Thus the generating functions for $N=2n+1$ and $N=2n$ are identical and it is enough to consider even $N$.
Let $N=2n$ be even (and  $n>\d^2$) and $U_n={M_N / 2}$. 
Then the generating function of $U_n$ is
\begin{align*}
\mathbb{E}t^{U_n}&=[z^{2n}] {1\over \sqrt{1-4pqtz^2}}{\sqrt{1-4pqz^2} \over 1-z^2}\\
                   &=[z^{n}] {1\over \sqrt{1-4pqtz}}{\sqrt{1-4pqz} \over 1-z}
\end{align*}
so that the $k-$th factorial moment $u_{k,n}=\mathbb{E}\left(U_n(U_n-1)\cdots(U_n-k+1)\right)$ of $U_n$ is
given by 
\begin{align*}
u_{k,n}&= k! (-1)^k {-\frac{1}{2} \choose k}(4pq)^k\,[z^{n-k}]{1\over ({1-4pqz})^k\,(1-z)}\\
       &= k (-1)^k {-\frac{1}{2} \choose k}(4pq)^k\,[z^{n-k}]{1\over (1-z)}\int _{0}^{\infty} x^{k-1} e^{-(1-4pqz)x}\,dx\\
       &= (-1)^k {-\frac{1}{2} \choose k}(4pq)^k\,\int _{0}^{\infty} kx^{k-1} e^{-x}\left(\sum_{j=0}^{n-k}\frac{(4pqx)^j}{j!}\right)\,dx. 
\end{align*}
Now denote by $Poiss (\lambda)$ a random variable having  the Poisson  distribution with parameter $\lambda$. As $4pq=1-({\d^2}/{2n})$, we obtain 
\begin{align*}
&\int _{0}^{\infty} k x^{k-1} e^{-x}\left(\sum_{j=0}^{n-k}\frac{(4pqx)^j}{j!}\right)\,dx\\
&=\int _{0}^{\infty} k x^{k-1} e^{-x(1-4pq)}\,\mathbb{P}\left(Poiss(4pqx)\leq n-k \right)\,dx\\
&=n^k\,\int _{0}^{\infty} k y^{k-1} e^{-\d^2 y/2}\,\mathbb{P}\left(Poiss((n-\frac{\d^2}{2})y)\leq n-k\right)\,dy. 
\end{align*}
By the central limit theorem,  
$$\mathbb{P}\left(Poiss((n-\frac{\d^2}{2})y)\leq n-k\right)\longrightarrow\left\{
\begin{array}{cll}  &1  &\mbox{ for } 0\leq y<1\\ &\frac{1}{2} &\mbox{ for } y=1 \\ &0 &\mbox{ for } y>1
\end{array}\right.$$
so that for every $k$ we have 
$$\frac{u_{n,k}}{n^k}\longrightarrow (-1)^k {-\frac{1}{2} \choose k} \int_{0}^1 ky^{k-1}e^{-y\d^2/2}\,dy.$$ 
 Hence ${\mathbb{E}T_N^k}/{N^k}$
 tends to the same limit. This shows the second assertion. Finally,  $$\EE C^k = (-1)^k {-\frac{1}{2} \choose k}={2k \choose k} {1 \over 2^{2k}}$$  and 
integration by parts shows that $\int_{0}^1 ky^{k-1}e^{-y\d^2/2}\,dy=\mathbb{E} \min\{1,D_\d^k \}$.
Thus all moments of ${T_N / N}$ converge to the corresponding moments of $C\cdot\min\{1,D_\d \}$. Since the distribution of 
$C\cdot\min\{1,D_\d \}$ is clearly determined by its moments the first assertion follows.
\end{proof} 

\begin{rem} 
As an immediate consequence of the scaling properties of Brownian motion we see that  
the distribution of 
$$W_T=\sup\{ t\leq T\,:\, \sigma B_t+\d t=0\}$$ is the same as that of $C\cdot\min\{T,D_{\d/\sigma} \}$. 
The time of the last zero of $\;\sigma B_t+\d t\;$ in the interval
$[0,\infty)$ is thus distributed as $C\cdot D_{\d/\sigma} $, which is the gamma distribution with parameters ${\d^2 / 2\sigma^2}$ and ${1}/{2}$.
\qed 
\end{rem}

\begin{rem} Clearly $V_0$ (the occupation time on the negative axis) is stochastically smaller than $W_\infty$ (the last exit time from zero), and the results above quantify this
precisely. We find e.g. that $$\EE(V_0)=\dfrac {\s}{2\d^2}=\dfrac{1}{2}\,\EE(W_\infty).$$  
\qed 
\end{rem}
\black

\begin{rem} Last-exit times of Brownian motion from moving boundaries have been studied intensively,
and  more complicated  expressions for the density of the last-exit time from a linear boundary were derived in \cite{Sal} and \cite{Imh2}. The representation in (\ref{thmle})  appears to be new, as it
is not mentioned in the encyclopedic monograph \cite{BorS}. For the density of the sojourn time found by Tak\'acs by a random walk limit two ``purely Brownian" explanations have 
been given in \cite{DonY}. It is natural to ask for such an explanation for the representation in (\ref{thmle}).
  \qed  
\end{rem}

\section{Limit of occupation times for shifted random walks}

In this section we consider a shifted random walk. Specifically, let $(X_{\d,1},X_{\d,2},\ldots)$ be a  parametrized sequence
of iid random variables with $\EE(X_{\d,i})=0$, $\var(X_{\d,i})=\s(\d) \in (0, \infty)$. Let $\d>0$ and 
$$ Y_i^\d =X_{\d,i}+\d, \ \ 
  S_{\d,n}=\sum_{i=1}^n X_{\d,i}, \ \ 
 S_n^\d=\sum_{i=1}^n Y_i^\d.$$ 
We are interested in the occupation time 
  $$Z_0^\d=\sum_{i=1}^\infty 1_{(-\infty,0)}(S_n^\d).$$ 
Throughout this section we assume that $\s(\d)\lra \s>0$ as $\d\lra 0$ and that 
 the following Lindeberg-type condition holds:  for every $\eps>0$,
\begin{equation}\label{lind}
\lim_{\d\lra 0} \int_{|\d X_{\d,1}|>\eps} X_{\d,1}^2\,d\PP=0.
\end{equation} 
These conditions are chosen such that for the triangular array with the variables 
$$Z_{\d,k}=\dfrac{\d}{\sigma(\d)}X_{\d,k}, \ \ k=1,\ldots,\lfloor\dfrac{1}{\d^2}\rfloor $$ 
the central limit theorem holds: indeed, $\var(Z_{\d,1})=\d^2$ and the Lindeberg condition for this triangular array reads as 
$$\lim_{\d\lra 0} \frac{1}{\d^2}\int_{|Z_{\d,1}|>\eps\d^2\lfloor\frac{1}{\d^2}\rfloor} Z_{\d,1}^2\,d\PP=
\lim_{\d\lra 0} \frac{1}{\s(\d))}\int_{|\d X_{\d,1}|>\eps \sigma(\d) \d^2\lfloor\frac{1}{\d^2}\rfloor} X_{\d,1}^2\,d\PP=
0 \ \  \text{for every} \  \eps>0,   
$$ 
which is clearly true under the conditions above. 

We use similar ideas as Prohorov \cite{Pro}, who proved the following:   

\begin{thm} (Prohorov) In the situation above let
$M^\d =\min\{ S_n^\d\;:\;n\geq 0\}$. Then 
$$\PP(\d M^\d>x)\lra e^{-2x/\s} \ \mbox{ for all } x>0.$$ 
\end{thm}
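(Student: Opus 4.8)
The goal is to show that the all-time minimum $M^\d$ of the shifted walk $S_n^\d = S_{\d,n} + n\d$ satisfies $\PP(\d M^\d > x) \to e^{-2x/\s}$, i.e.\ that $-\d M^\d$ converges in distribution to an exponential with mean $\s/2$. The plan is to combine a functional invariance principle on a time scale of order $\d^{-2}$ with the reflection-type identity for the minimum of Brownian motion with drift. First I would rescale time and space simultaneously: set $W^\d(t) = \d\, S^\d_{\lfloor t/\d^2\rfloor}$ for $t\ge 0$. By construction $W^\d(t) = \d\, S_{\d,\lfloor t/\d^2\rfloor} + \d^2\lfloor t/\d^2\rfloor$; the first term, being $\sigma(\d)$ times the normalized partial-sum process of the triangular array $\{Z_{\d,k}\}$ which satisfies the Lindeberg condition verified in the text, converges in $D[0,\infty)$ to $\sigma\, B_t$ by the Donsker--Prohorov invariance principle for triangular arrays, while the deterministic drift term converges uniformly on compacts to $t$. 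Hence $W^\d \Rightarrow (X_t)_{t\ge 0}$ with $X_t = \sigma B_t + t$, a Brownian motion with unit drift.

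Next I would transfer this to the minimum. The global minimum functional $w \mapsto \inf_{t\ge 0} w(t)$ is not continuous on $D[0,\infty)$, so the argument needs care: on any finite window $[0,T]$ the infimum over $[0,T]$ is a.s.-continuous with respect to the limit law, giving $\d\,\min_{0\le n\le T/\d^2} S_n^\d \Rightarrow \inf_{0\le t\le T} X_t$; one then lets $T\to\infty$ and controls the tail, using that $S_n^\d$ has positive drift $\d$ so that $\min_{n\ge 0} S_n^\d = \min_{0\le n\le T/\d^2} S_n^\d$ with probability tending to one as $T\to\infty$, uniformly in small $\d$ (a uniform integrability / maximal-inequality estimate, e.g.\ via the fact that $S_n^\d - n\d/2$ is eventually negative with controllable probability). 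This yields $\d M^\d \Rightarrow \inf_{t\ge 0} X_t =: M$. Finally, the distribution of $M = \inf_{t\ge 0}(\sigma B_t + t)$ is classical: by the strong Markov property at the hitting time of any level $-x<0$ together with the formula $\ell_{T_z}$ of \autoref{lem}(1) (with $\d=1$, $\s = \sigma^2$) applied to $-X$, one gets $\PP(M < -x) = \PP(T_{-x}^{(-X)} < \infty) = e^{-2x/\sigma^2}$, which is exactly the claimed limit.

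The main obstacle is the second step: passing from convergence of the minimum over a bounded time window to convergence of the \emph{global} minimum. This requires a tightness-type estimate showing that the contribution of times beyond $T\d^{-2}$ is negligible uniformly in $\d$, i.e.\ $\limsup_{\d\to 0}\PP(\inf_{n > T/\d^2} S_n^\d < \inf_{0\le n\le T/\d^2} S_n^\d) \to 0$ as $T\to\infty$. One clean way is to note $\PP(\inf_{n> T/\d^2} S_n^\d < -\d^{-1}K) \le \PP(\exists\, n> T/\d^2:\ S_{\d,n} < -n\d - \d^{-1}K)$ and apply a second-moment / Kolmogorov-type maximal inequality to the centered walk $S_{\d,n}$ together with the bound $\var(S_{\d,n}) = n\,\s(\d)$, which makes the drift term $n\d$ dominate for $n\gtrsim \d^{-2}$; since $\s(\d)\to\s$ this is uniform in small $\d$. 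Alternatively, Prohorov's original argument used the Wiener--Hopf / Spitzer-type exact expression for the law of $M^\d$ and a direct asymptotic analysis, and one could instead follow that route — but given that the invariance principle is already being invoked elsewhere in the paper (Section 3's main theorem rests on Donsker), the functional-limit approach seems the most economical, with the global-minimum tail bound being the one genuinely technical lemma to prove.
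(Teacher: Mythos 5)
The paper does not actually prove this theorem: it is stated as a cited result (Prohorov \cite{Pro}, with the remark that Kingman obtained it earlier under an exponential moment assumption), so there is no in-paper proof to compare against. Judged on its own, your argument is correct in outline and is exactly in the spirit of how the paper proves Theorem \ref{thmconv}: Donsker's invariance principle for the triangular array on the time scale $\d^{-2}$, continuity of the relevant functional on $C[0,T]$, and a uniform-in-$\d$ tail estimate to let $T\to\infty$. In fact the ``one genuinely technical lemma'' you identify -- that $\PP(\min_{n\ge T/\d^2}\d S_n^\d\le 0)$ can be made small uniformly in small $\d$ by choosing $T$ large -- is precisely the paper's Lemma 3.2 and Corollary \ref{cor1} (dyadic blocking plus Kolmogorov's inequality), which appear immediately after the theorem statement; so your proof can simply invoke them rather than re-derive the bound. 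Two small points to tidy up. First, as stated the theorem has a sign slip: since $S_0^\d=0$ one has $M^\d\le 0$, so $\PP(\d M^\d>x)=0$ for $x>0$; the intended assertion is $\PP(\d M^\d<-x)\to e^{-2x/\s}$, i.e.\ $-\d M^\d$ converges to an exponential law of rate $2/\s$, which is the version you prove. Second, Lemma \ref{lem}(1) is stated only for a positive drift and an upward level, so it does not literally give $\PP(\inf_{t\ge 0}(\sigma B_t+t)<-x)=e^{-2x/\s}$; you need the companion fact for hitting a negative level from a positive-drift Brownian motion, which follows for instance from optional stopping of the exponential martingale $\exp(-2X_t/\s)$, or can be read off from Lemma \ref{lem}(2) by letting $s\downarrow 0$ after conditioning. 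These are cosmetic repairs; the structure of the argument is sound.
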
 
In \cite{Pro} 
 the maximum in the case of negative drift was considered instead of $M^\d$. The result had been proved earlier 
by Kingman under the assumption of the existence of an exponential moment.

The following lemma will be needed to obtain tightness bounds.
\begin{lem}
 In the situation above let $z\geq 0$ and let $\d_k>0$ be a sequence of positive numbers satisfying $\sup_{k\geq 1} \s(\d_k)< \infty$.
 Then for every $\eps>0$ we can find a $T$ such that for all $k$
$$\PP(\sup_{n\geq T/\d_k^2} (|S_{\d_k,n}|-n\d_k)\geq -\frac{z}{\d_k})<\eps.$$ 
\end{lem}
 \begin{proof} First consider a sequence $S_n$ of partial sums
of an arbitrary iid sequence $(X_i)$ with $\EE(X_1)=0$ and $\var(X_1)=\s$. Let $a,b>0,Na>b$ and consider the event $E_N =\{\sup_{n\geq N} (|S_n|-na)\geq -b\}$. 
Clearly \begin{align*}E_N&=\bigcup_{j=0}^\infty\left\{\max_{2^jN\leq n< 2^{j+1}N} (|S_n|-na)\geq -b\right\}\\
                         &\subseteq \bigcup_{j=0}^\infty\left\{\max_{2^jN\leq n< 2^{j+1}N}|S_n|\geq 2^jNa- b \right\}\\
                         &\subseteq \bigcup_{j=0}^\infty\left\{\max_{n\leq 2^{j+1}N}|S_n|\geq 2^jNa- b \right\}.\\
\end{align*}
By Kolmogorov's inequality,  
$$\PP(\max_{n\leq 2^{j+1} N}|S_n|\geq 2^jNa-b)\leq \frac{2^{j+1} N\s}{(2^jNa-b)^2}. $$
Now set $N=\frac{T}{\d^2}, a=\d,  b=\frac{z}{\d}, X_i=X_{\d,i}$. It follows that 
$$\PP(\sup_{n\geq T/\d^2} (|S_{\d,n}|-n\d)\geq -\frac{z}{\d})\leq\sum_{j=0}^{\infty}\frac{2^{j+1} T \s(\d)}{(2^jT-z)^2}.$$
The bound on the right side depends on $\d$ only via $\s(\d)$ and can clearly be made arbitrarily small (under the assumptions above).
\end{proof}

\begin{cor}\label{cor1}
 In the situation above let $z\geq 0$ and let $\d_k>0$ be a sequence of positive numbers satisfying $\sup_{k\geq 1} \s(\d_k)<\infty$. 
Then for every $\eps>0$ one can find a $T$ such that for all $k$
$$\PP(\min_{n\geq T/\d_k^2} \d_k(S_{\d_k,n}+n\d_k)\leq z)<\eps. $$ 
\end{cor}

\begin{thm}\label{thmconv}
 $${\d^2 \over 2\s(\d)\black}Z_0^\d \lra A \ \mbox{ in distribution} \mbox{ as }\d\searrow 0. $$
\end{thm}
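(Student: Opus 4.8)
The natural strategy is a functional limit theorem combined with a continuous mapping argument, exactly parallel to the Brownian computations in Section~2. First I would rescale: define the processes
\[
  X^{(\d)}(t)=\frac{\d}{\s(\d)}\,S^\d_{\lfloor t/\d^2\rfloor}
            =\frac{\d}{\s(\d)}\,S_{\d,\lfloor t/\d^2\rfloor}+\frac{\d}{\s(\d)}\,\d\,\lfloor t/\d^2\rfloor,
  \qquad t\ge 0.
\]
Under the standing assumptions ($\s(\d)\to\s$ and the Lindeberg-type condition \eqref{lind}), the triangular-array CLT noted in the text applies, and Donsker's invariance principle in its Lindeberg form (Prohorov-style) gives $X^{(\d)}\Rightarrow \widetilde X$ in $D[0,\infty)$, where $\widetilde X_t=B_t+t$ is standard Brownian motion with unit drift; the drift term contributes $\frac{\d^2}{\s(\d)}\lfloor t/\d^2\rfloor\to t$ uniformly on compacts. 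Then a change of variables in the sum defining $Z_0^\d$ shows
\[
  \d^2 Z_0^\d=\d^2\sum_{n\ge 1}1_{(-\infty,0)}(S_n^\d)=\int_0^\infty 1_{(-\infty,0)}\!\bigl(X^{(\d)}(t)\bigr)\,dt+O(\d^2),
\]
so $\d^2 Z_0^\d$ is (up to a negligible term) the occupation-time functional of $X^{(\d)}$ below $0$, which I want to converge to $\int_0^\infty 1_{(-\infty,0)}(\widetilde X_t)\,dt=V_0$ with $\s=1$, $\d=1$. By Lemma~\ref{lem}(2) this limit has Laplace transform $2/(1+\sqrt{1+s})$, i.e.\ it is distributed as $A$; since $\s(\d)\to\s$, rescaling by $\d^2/(2\s(\d))$ instead of $\d^2$ produces the stated limit $A$ (here one uses that $V_0$ for drift $\d=1$, variance $\s(\d)$ rescaled by $1/(2\s(\d))$ equals $A$, matching the remark after Lemma~\ref{lem}).

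The main obstacle is that the occupation-time functional $f\mapsto\int_0^\infty 1_{(-\infty,0)}(f(t))\,dt$ is \emph{not} continuous on $D[0,\infty)$ in general, and more seriously it involves an \emph{infinite} time horizon, so weak convergence on $D[0,\infty)$ alone does not pass to the limit. This is precisely what Corollary~\ref{cor1} is designed to fix. I would split the functional at a large deterministic time $T/\d^2$ (equivalently, time $T$ after rescaling): the contribution from $n\ge T/\d^2$ is controlled by Corollary~\ref{cor1}, which says that for every $\eps>0$ there is a $T$ with $\PP(\min_{n\ge T/\d^2}\d(S_{\d,n}+n\d)\le z)<\eps$ uniformly in small $\d$ (along any sequence $\d_k$), i.e.\ with high probability $X^{(\d)}$ stays above $0$ after time $T$, so the tail occupation time vanishes. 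The analogous tail bound for the limit process $\widetilde X$ (which drifts to $+\infty$) is classical. On the compact window $[0,T]$ the occupation-time functional $f\mapsto\int_0^T 1_{(-\infty,0)}(f(t))\,dt$ \emph{is} a.s.\ continuous with respect to the law of $\widetilde X$ (since $\widetilde X$ spends zero Lebesgue time at level $0$, a.s.), so the continuous mapping theorem applies there. Letting $T\to\infty$ after $\d\to 0$, with the uniform tail estimate closing the double limit, yields convergence in distribution of $\d^2 Z_0^\d$ to $V_0$.

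Concretely I would carry out the steps in this order: (i) verify $X^{(\d)}\Rightarrow \widetilde X$ in $D[0,\infty)$ via the Lindeberg-Donsker theorem, handling the drift term separately; (ii) rewrite $\d^2 Z_0^\d$ as $\int_0^\infty 1_{(-\infty,0)}(X^{(\d)}(t))\,dt$ up to an $O(\d^2)$ error coming from the floor function in the time change; (iii) for fixed $T$, apply the continuous mapping theorem to get $\int_0^T 1_{(-\infty,0)}(X^{(\d)}(t))\,dt\Rightarrow\int_0^T 1_{(-\infty,0)}(\widetilde X_t)\,dt$, using a.s.-continuity of the truncated functional under the law of $\widetilde X$; (iv) use Corollary~\ref{cor1} and the corresponding (easy) tail bound for $\widetilde X$ to show both $\int_T^\infty 1_{(-\infty,0)}(X^{(\d)}(t))\,dt$ and $\int_T^\infty 1_{(-\infty,0)}(\widetilde X_t)\,dt$ are uniformly small in probability as $T\to\infty$; (v) combine via a standard $3\eps$-argument to conclude $\d^2 Z_0^\d\Rightarrow V_0$, and finally rescale by $1/(2\s(\d))$ and invoke Lemma~\ref{lem}(2) with the remark following it to identify the limit as $A$. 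One small technical point worth isolating in step~(iii) is that weak convergence on $D[0,\infty)$ restricts to weak convergence on $D[0,T]$ only if $T$ is a.s.\ not a fixed discontinuity, which holds here since $\widetilde X$ is continuous; and in step~(ii) one should note $1_{(-\infty,0)}$ is the indicator of an open set, so the integrand is lower semicontinuous, which is enough together with the a.s.\ zero occupation of the boundary $\{0\}$ by $\widetilde X$.
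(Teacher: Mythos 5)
Your proposal is correct and follows essentially the same route as the paper's own proof of Theorem~\ref{thmconv}: Donsker's invariance principle for triangular arrays on $D[0,T]$, almost-sure continuity of the truncated occupation-time functional under the law of the limiting drifted Brownian motion, Corollary~\ref{cor1} to control the tail $n\ge T/\d^2$ uniformly in $\d$, and an interchange of the limits $T\to\infty$ and $\d\searrow 0$. The only point to tidy is the normalization: with your spatial scaling $\d/\s(\d)$ the limit process is not $B_t+t$ (the drift term tends to $t/\s$, not $t$; the paper instead uses $U^\d(t)=\d S^\d_{\lfloor t/\d^2\rfloor}\Rightarrow \sigma B+\mathrm{id}$), but since $1_{(-\infty,0)}$ is invariant under positive spatial rescaling this only affects the final constant identification, which you correctly resolve via the remark following Lemma~\ref{lem}.
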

\begin{proof}
By the remark following \ref{lem} it suffices to show that $\d^2 Z_0^\d\lra V_0$ in distribution, where $V_0$ is the distribution of the time
the process $W_t=\sigma B_t+t$ spends below zero. 

Let $T>0$ and consider the sequence of processes $$U^\d(t)=\d\,\sum_{i=1}^{\lfloor t/\d^2\rfloor}Y_i^\d, \;\;\;0\leq t \leq T.$$
By Donsker's limit theorem (in the version for triangular arrays, see e.g. \cite{Bil}, p.147), 
the sequence $U^\d\lra \sigma B + id$ in distribution in $D[0,T]$, where $\sigma B + id$
denotes the Brownian motion with variance $\s$ and drift $1$, i.e., with coordinate variables $\sigma B_t +t$. For any bounded Borel function
$v$ on $[0,T]$ the functional 
$x\mapsto \int_{0}^T v(x_t)  \,dt$ on $D[0,T]$ is Skorohod-measurable and continuous except on a set of $B$-measure $0$ (see e.g. \cite{Bil}, p. 247).
Thus, 
\begin{align*}
\d^2 \mbox{card}(\{n\; :\;  S_n^\d < 0, 1\leq n \leq T/\d^2\})&=\int_0^{\d^2\lfloor T/\d^2\rfloor} 1_{(-\infty,0)}(U^\d(t))\,dt\\
 & \lra \int_0^T 1_{(-\infty,0)}(X_t)\,dt \mbox{ as }\d\searrow 0\,
\end{align*} 
in distribution and we will be done if we can justify the interchange of the limits $T\lra \infty$ and $ \d \searrow 0$.
Let $\d_k > 0$ be a sequence decreasing to zero and let $\eps>0$. By corollary \ref{cor1} 
we can find an $N$ such that $\PP(\min_{n\geq N/\d_k^2} S_n^{\d_k}\leq 0)<\eps$ for all $k$.
Thus,  
\begin{equation}\label{lim}\lim_{T\lra \infty} \sup_{k\ge 1}\PP(\min_{n\geq T/\d_k^2} S_n^{\d_k}\leq 0)=0
\end{equation} 
and the assertion follows since, by the monotone convergence theorem,  
$$ \lim_{T\lra \infty}\int_0^T 1_{(-\infty,0)}(X_t)\,dt = \int_0^\infty 1_{(-\infty,0)}(X_t)\,dt.$$ 
\end{proof}

\begin{rem}
A related discussion can be found in \cite{Shnwa}. In that paper, Shneer and Wachtel derived an extension of Kolmogorov's inequality and treated the maximum of random walks with negative drift and step size distributions attracted to a stable law of index $\a\in(1,2]$.
In the case of finite variance ($\a=2$) they already remarked that their results (including in particular the crucial relation \eqref{lim}) remain valid 
  if the conditions assumed above hold.
\qed 
\end{rem} 

\begin{rem} 
Assume that the $X_i$ are independent with $\EE(X_i)=0$ and variances $\var(X_i)=\sigma_i^2$
and satisfy Lindeberg's condition. Let $s_i^2=\sum_{k=1}^i \sigma_k^2$. Then the step processes
$X_n(t)$ which jump to the value ${S_i / s_n}$ at time ${s_i^2 / s_n^2}$ converge weakly to
a standard Brownian in $D[0,1]$ (by Prohorov's extension of Donsker's theorem). One may thus expect that they exhibit a similar limiting behavior.
\qed 
\end{rem} 

 Finally, replacing $0$ by $z/\d$ and repeating the steps in the proof of \ref{thmconv} yields 

\begin{thm}\label{thmz}
In the situation above let $z>0$ and $Z_z^\d=\sum_{n=1}^\infty 1_{(-\infty,z)}(S^\d_n)$ . Then 
$\d^2 Z^\d_{z/\d} \lra V_z$ in distribution, where the Laplace transform of $V_z$ is given in Lemma \ref{lem2}  
with $\d=1$. 
\end{thm}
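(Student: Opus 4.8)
The plan is to mimic the proof of \autoref{thmconv} almost verbatim, with the constant level $0$ replaced everywhere by the moving level $z/\d$, so that after the scaling by $\d$ this level becomes the fixed constant $z$. First I would reduce, exactly as in the previous proof, to showing that $\d^2 Z^\d_{z/\d}\lra V_z$ in distribution, where $V_z$ is the occupation time below $z$ of the process $W_t=\sigma B_t+t$; the final normalization by $\d^2/(2\s(\d))$ and the identification of the Laplace transform of $\frac{\d^2}{2\s(\d)}V_z$ with $\ell_{T_z}(s)\,\ell_{V_0}(s)$ at $\d=1$ (that is, with $\ell_{V_z}(s)$ as in \autoref{lem2}) then follows from the scaling relation $W_t\deq \sigma B_t+t$ and \autoref{lem2} itself, since $V_z=T_z+V_0'$ with the two summands independent. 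Thus the substance of the argument is the weak limit $\d^2 Z^\d_{z/\d}\lra V_z$.

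Next I would set up the same functional picture: for fixed $T>0$ let $U^\d(t)=\d\sum_{i=1}^{\lfloor t/\d^2\rfloor}Y_i^\d$ on $[0,T]$, which by Donsker's theorem for triangular arrays converges in distribution in $D[0,T]$ to $\sigma B+\mathrm{id}$. The key observation is the identity
$$\d^2\,\mathrm{card}\bigl(\{n: S_n^\d<z/\d,\ 1\le n\le T/\d^2\}\bigr)=\int_0^{\d^2\lfloor T/\d^2\rfloor}1_{(-\infty,z)}\bigl(U^\d(t)\bigr)\,dt,$$
since $S_n^\d<z/\d$ is equivalent to $\d S_n^\d<z$, i.e.\ to $U^\d(n\d^2)<z$. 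The functional $x\mapsto\int_0^T 1_{(-\infty,z)}(x_t)\,dt$ on $D[0,T]$ is again Skorohod-measurable and continuous off a set of $(\sigma B+\mathrm{id})$-measure zero (the occupation measure of the level $z$ is a.s.\ zero), so the continuous mapping theorem gives, as $\d\searrow 0$,
$$\int_0^{\d^2\lfloor T/\d^2\rfloor}1_{(-\infty,z)}(U^\d(t))\,dt\ \lra\ \int_0^T 1_{(-\infty,z)}(X_t)\,dt$$
in distribution, where $X_t=\sigma B_t+t$.

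Finally I would justify the interchange of $T\to\infty$ and $\d\searrow 0$, which is the only place where the moving level requires a small change of bookkeeping. On the right-hand side, monotone convergence gives $\int_0^T 1_{(-\infty,z)}(X_t)\,dt\uparrow V_z$ as $T\to\infty$. On the left-hand side one needs the tail estimate $\lim_{T\to\infty}\sup_{k\ge 1}\PP(\min_{n\ge T/\d_k^2}S_n^{\d_k}\le z/\d_k)=0$ for any sequence $\d_k\searrow 0$; but $\min_{n\ge T/\d_k^2}S_n^{\d_k}\le z/\d_k$ is exactly $\min_{n\ge T/\d_k^2}\d_k(S_{\d_k,n}+n\d_k)\le z$, which is the event controlled by \autoref{cor1} with that very $z$. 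Hence the same $\eps$-$T$ argument as in \autoref{thmconv} applies, and the two limits may be exchanged, giving $\d^2 Z^\d_{z/\d}\lra V_z$. I expect no serious obstacle: the main point to check carefully is that \autoref{cor1} is stated with a general $z\ge0$ precisely so that it covers the shifted level here, and that the continuity-set argument for the occupation-time functional at level $z$ is identical to the one at level $0$ because Brownian motion with drift spends Lebesgue-null time at any fixed level.
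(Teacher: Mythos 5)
Your proposal is correct and is exactly what the paper intends: the paper gives no separate proof of this theorem beyond the instruction to replace $0$ by $z/\d$ and repeat the argument of Theorem \ref{thmconv}, and you carry out precisely that substitution, correctly noting that Corollary \ref{cor1} was stated with general $z\ge 0$ for this purpose and that the occupation-time functional at level $z$ is still a.s.\ continuous. (The only stray remark is your mention of a ``final normalization by $\d^2/(2\s(\d))$'' --- the theorem as stated uses only the scaling $\d^2$ and the limit $V_z$ already refers to the drift-$1$, variance-$\s$ process --- but this does not affect the argument.)
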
 

 If here $z$ depends on $\d$ such that  $\d z(\d)\lra 0$ as $\d\lra 0$ we find \black

\begin{prp} In the situation above let $(z(\d))$ a sequence of positive numbers with $z(\d)=\mathrm{o}(1/\d)$ and $\sup_\d z(\d)<\infty$. Then 
	$$\d^2 Z^\d_{z(\d)}\lra V_0=2\s A \mbox{  as  } \d \lra 0.$$
\end{prp}
\begin{proof} Clearly 
$V_0$ is stochastically smaller than any distributional limit of $\d^2 Z^\d_{z(\d)}$ 
(because $Z_0^\d$ is stochastically smaller than $Z_y^\d$ for $y\geq 0$),
furthermore $V_y=T_y+V_0$ is stochastically smaller than $V_z$ for $y\leq z$. 
Let $\eps>0$ and $C=\sup_\d z(\d)$, then $C<\infty$ and $\d^2 Z^\d_{\eps C/\d}\lra V_{\eps C}$ in distribution as $\d \lra 0$ (by theorem \ref{thmz}).
Since $Z^\d_{z(\d)}=Z^\d_{\eps z(\d)/\eps}$ is stochastically smaller than $Z^\d_{\eps C/\d}$ for $\d\leq \eps$, 
any distributional limit of $\d^2 Z^\d_{z(\d)}$ is stochastically smaller than $V_{\eps C}$. Thus the distributional limit
exists and equals  $V_0$.
\end{proof}

We close this section with an application of Theorem \ref{thmconv} 
in a frequently encountered situation.

\begin{exl}\label{exl1} (Expectation shift in exponential families.)\\
Let $U$ be a non-constant real random variable such that the moment generating function $$m(s)=\EE e^{sU}$$ is finite in an open interval $I$ around  $0$, and $E(U)=m^\prime(0)=0$,
$\mathrm{Var}(Y)=\s$. 
 
For $p\in I$ let $U_p$ have the ``associated'' distribution with moment generating function $m_p(s)={ m(p+s) \over m(p)}$, clearly $U_p$ has expectation 
$\EE(U_p)={ m^\prime(p) \over m(p)}$ and variance $\s(p)={ m^{\prime\prime}(p)m(p)- (m^\prime(p))^2 \over m(p)^2}$.\\ Let $Z_0(p)$ denote the random variable
``time spent in $(-\infty,0)$'' by the random walk generated by iid variables with distribution $U_p$. Then 
$${(\EE(U_p))^2 \over 2 \s(p)}\,Z_0(p)\lra  A  \mbox{ in distribution for } p\searrow 0 \;\;\;.$$
\begin{proof}
It is well known that $s\mapsto \log m(s)$ is strictly convex on $I$, thus $p\mapsto {m^\prime(p) \over m(p)}=\EE(U_p)$ is strictly increasing. Thus we may parameterize the
distributions by $\d(p)= \EE(U_p)$. We have  $\d(p)\searrow 0$ for $p \searrow 0$ and 
$\s(p)\lra \s$ as $p\searrow 0$. Let $X_{\d(p)}=U_p-\EE(U_p)$ and $Y^{\d(p)}=X_{\d(p)}+\d(p)=U_p$.
Then the Lindeberg condition \eqref{lind} is satisfied, since by Chebyshev's inequality   
$$\int_{|\d(p) X_{\d(p)}|>\eps} X_{\d(p)}^2\,d\PP\leq {\d^2(p)\s(p)\over \eps^2}$$
and the claim follows from Theorem \ref{thmconv}. 
\end{proof}

\end{exl}

\black

\section{The fluctuation theoretic approach}

The topics investigated here belong to the fluctuation theory of random
walks. We recall some basic facts, which will be used in 
the sequel and can e.g. be found in Section XII.7 of \cite{Fel2}. 

We consider a random walk $(S_n)_{n\ge 1}$, i.e., a sequence of partial sums of iid random variables 
and let $R=\inf\{ n\geq 1\;:\; S_n<0\}$ and  $W=\inf\{n \geq 1\;:\; S_n\geq 0\}$ be the lengths of the 
first strictly descending and weakly ascending ladder epochs of the random walk, 
respectively.We denote by $r(z)$ and $a(z)$
denote the corresponding  probability generating functions  and set $\mu = \EE W $.   
The occupation time of interest is $Z_0=\sum_{n=1}^\infty 1_{(-\infty, 0)}(S_n)$. 
\begin{thm}\label{thmSpA}(Sparre Andersen)
For $|z|<1$
\begin{align*}
{1 \over 1-r(z)}&=\exp \left\{\sum_{n=1}^\infty {z^n \over n} \PP(S_n<0)\right\} \mbox{  }\\ 
{1 \over 1-a(z)}&=\exp \left\{\sum_{n=1}^\infty {z^n \over n} \PP(S_n \ge 0)\right\} \mbox{  }\end{align*}
\end{thm}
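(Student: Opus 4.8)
The plan is to obtain both formulas from a single substantive input, Sparre Andersen's ``survival'' identity, the rest being elementary power-series bookkeeping; everything below is for $|z|<1$, where all series converge absolutely.

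First I would record two elementary identities. Since $W\ge 1$ we have $\{W>n\}=\{S_1<0,\dots,S_n<0\}$, and, interchanging summations,
\[
\sum_{n\ge 0}z^n\,\PP(W>n)=\frac{1}{1-z}-\sum_{n\ge 0}z^n\,\PP(W\le n)=\frac{1}{1-z}-\frac{a(z)}{1-z}=\frac{1-a(z)}{1-z},
\]
where $a(z)=\sum_{n\ge 1}\PP(W=n)z^n$; this holds whether or not $W$ is defective. Likewise $\{R>n\}=\{S_1\ge 0,\dots,S_n\ge 0\}$ gives $\sum_{n\ge 0}z^n\,\PP(R>n)=\frac{1-r(z)}{1-z}$. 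Hence
\begin{align*}
\frac{1-a(z)}{1-z}&=\sum_{n\ge 0}z^n\,\PP(S_1<0,\dots,S_n<0),\\
\frac{1-r(z)}{1-z}&=\sum_{n\ge 0}z^n\,\PP(S_1\ge 0,\dots,S_n\ge 0),
\end{align*}
the $n=0$ term being $1$ in each sum.

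The heart of the matter is Sparre Andersen's identity: for $|z|<1$,
\begin{align*}
\sum_{n\ge 0}z^n\,\PP(S_1<0,\dots,S_n<0)&=\exp\Big\{\sum_{n\ge 1}\tfrac{z^n}{n}\,\PP(S_n<0)\Big\},\\
\sum_{n\ge 0}z^n\,\PP(S_1\ge 0,\dots,S_n\ge 0)&=\exp\Big\{\sum_{n\ge 1}\tfrac{z^n}{n}\,\PP(S_n\ge 0)\Big\}.
\end{align*}
The two are dual to one another (replace ``$<0$'' by ``$\ge 0$'' throughout), so only one need be proved. Setting $q_n=\PP(S_1<0,\dots,S_n<0)$, $q_0=1$, and taking the logarithmic derivative in $z$, the first identity is equivalent to the recursion $n\,q_n=\sum_{k=1}^n\PP(S_k<0)\,q_{n-k}$ for $n\ge 1$, and the plan is to prove this by the classical cyclic-permutation argument (see Feller, \S XII.6--7): the increments $(X_1,\dots,X_n)$ are exchangeable, so all $n$ cyclic rotations of the increment vector are equidistributed, and $n\,q_n$ becomes the expected size of a count over (rotation, cut-point) pairs; a deterministic combinatorial lemma about the behaviour of partial sums of a real sequence under cyclic rotation then matches that count with $\sum_k\PP(S_k<0)q_{n-k}$. (Equivalently, one quotes Spitzer's identity from \cite{Fel2}, of which this is the special case with zero weight on the negative partial sums.) This combinatorial lemma is where essentially all the work lies -- it is the main obstacle -- and the only further subtlety is the bookkeeping of ties among partial sums: the strict convention ``$<0$'' and the weak convention ``$\ge 0$'' are precisely the two admissible tie-breaking rules, which is exactly why $\PP(S_n<0)$ occurs in one identity and $\PP(S_n\ge 0)$ in the dual one.

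Finally I would combine the two steps. Substituting Sparre Andersen's identities into the displays of the first step and using $1-z=\exp\{-\sum_{n\ge 1}z^n/n\}$ together with $\PP(S_n<0)=1-\PP(S_n\ge 0)$,
\[
1-a(z)=(1-z)\exp\Big\{\sum_{n\ge 1}\tfrac{z^n}{n}\PP(S_n<0)\Big\}=\exp\Big\{-\sum_{n\ge 1}\tfrac{z^n}{n}\PP(S_n\ge 0)\Big\},
\]
and in the same way $1-r(z)=\exp\{-\sum_{n\ge 1}(z^n/n)\PP(S_n<0)\}$. Taking reciprocals yields the two asserted formulas. The entire difficulty is concentrated in the combinatorial step; the first and last steps are routine.
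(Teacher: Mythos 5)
Your derivation is correct, and in fact the paper offers no proof of this statement at all: Theorem 4.1 is recalled as a classical fact with a pointer to Section XII.7 of \cite{Fel2}, so there is nothing internal to compare against. Your elementary bookkeeping is right: $\{W>n\}=\{S_1<0,\dots,S_n<0\}$ and $\{R>n\}=\{S_1\ge 0,\dots,S_n\ge 0\}$ with the paper's conventions, the summation-by-parts giving $\sum_n z^n\PP(W>n)=(1-a(z))/(1-z)$ is valid whether or not $W$ is defective, and the passage from the ``survival'' form of Sparre Andersen's identity to the reciprocal form in the statement, via $1-z=\exp\{-\sum_n z^n/n\}$ and $\PP(S_n<0)+\PP(S_n\ge 0)=1$, is exactly the standard translation. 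The recursion $n\,q_n=\sum_{k=1}^n \PP(S_k<0)\,q_{n-k}$ is indeed equivalent to the survival identity and is the crux; you correctly isolate it and defer it to the classical cyclic-permutation/Spitzer argument, which is no worse than what the paper does. One imprecision worth fixing: the strict and weak versions are \emph{not} literally dual under a symmetry of the walk (reflection $X\mapsto -X$ exchanges positive and negative but preserves strict versus weak), so ``only one need be proved'' is not quite right as stated. Either run the combinatorial argument twice with the two tie-breaking conventions (as you hint at in your remark on ties), or prove one version plus an independent proof of the factorization $(1-r(z))(1-a(z))=1-z$ (e.g.\ via the first-maximum decomposition and time reversal) and deduce the other --- but note the paper derives that factorization \emph{from} Theorem 4.1, so the latter route requires its own argument for the factorization to avoid circularity.
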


An immediate consequence is the factorization theorem. 
\begin{thm}(Duality) 
For $|z|<1$
\begin{align*}
(1-r(z))(1-a(z))=1-z. 
\end{align*}
\end{thm}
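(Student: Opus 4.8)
The plan is to derive the Duality (factorization) theorem directly from the two Sparre Andersen identities in \autoref{thmSpA}. The key observation is that the events $\{S_n<0\}$ and $\{S_n\ge 0\}$ are complementary for each fixed $n$, so their probabilities sum to $1$. Hence, adding the two exponents in the Sparre Andersen formulas gives
\begin{align*}
\sum_{n=1}^\infty {z^n\over n}\,\PP(S_n<0)+\sum_{n=1}^\infty {z^n\over n}\,\PP(S_n\ge 0)
=\sum_{n=1}^\infty {z^n\over n}=-\log(1-z),
\end{align*}
valid for $|z|<1$ since both series converge absolutely there (the coefficients are bounded by $1/n$). Exponentiating, and using that $\exp$ turns the sum of exponents into a product, yields
\begin{align*}
{1\over 1-r(z)}\cdot{1\over 1-a(z)}=\exp\{-\log(1-z)\}={1\over 1-z},
\end{align*}
which rearranges to $(1-r(z))(1-a(z))=1-z$.

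First I would record that both power series $\sum z^n\PP(S_n<0)/n$ and $\sum z^n\PP(S_n\ge 0)/n$ are absolutely convergent for $|z|<1$, which legitimizes term-by-term addition and the manipulation $\exp(A)\exp(B)=\exp(A+B)$ for the corresponding analytic functions on the disk. Then I would invoke the pointwise identity $\PP(S_n<0)+\PP(S_n\ge 0)=1$, substitute, and recognize the resulting series as the standard expansion of $-\log(1-z)$. The final step is purely algebraic: multiply the two Sparre Andersen expressions, simplify the right-hand side to $(1-z)^{-1}$, and clear denominators. Note that $1-r(z)$ and $1-a(z)$ are nonzero for $|z|<1$ (indeed $r(z)$ and $a(z)$ are probability generating functions of $\N$-valued, possibly defective, random variables, so $|r(z)|,|a(z)|<1$ on the open disk unless the variable is a.s.\ constant, and even then the Sparre Andersen formula shows the left-hand sides are finite), so dividing is harmless.

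Honestly, there is no real obstacle here: this is the textbook "immediate consequence" alluded to in the excerpt. The only point requiring a word of care is the convergence/analyticity bookkeeping needed to justify exponentiating and multiplying the two identities on $|z|<1$ rather than just at the level of formal power series; once that is in place the argument is a one-line computation. If one prefers to stay entirely formal, one can equivalently argue at the level of formal power series in $z$, where the identity $\exp(A+B)=\exp(A)\exp(B)$ holds unconditionally, and then observe that both sides are the Taylor expansions of rational functions analytic on $|z|<1$, so the formal identity upgrades to an analytic one. I would present the analytic version, since it is the cleanest and matches the "$|z|<1$" hypothesis in the statement.
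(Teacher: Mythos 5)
Your argument is correct and is precisely the derivation the paper intends when it calls the factorization an ``immediate consequence'' of Theorem \ref{thmSpA}: add the two exponents, use $\PP(S_n<0)+\PP(S_n\ge 0)=1$ to recognize $-\log(1-z)$, and exponentiate. The extra care about convergence and nonvanishing of $1-r(z)$, $1-a(z)$ on $|z|<1$ is fine but not needed beyond what the paper takes for granted.
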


It follows from the factorization theorem is that $W$ ($R$) has a finite expected value if and only if $R$ ($W$) is defective, and that
the relations $\EE(R)\PP(W=\infty)=1$ and $\EE(W)\PP(R=\infty)=1$ hold.

At the combinatorial heart of fluctuation theory is the ``Sparre Andersen transformation'' (made explicit by Feller and 
refined by Bizley and Joseph) given in Lemma 3 of XII.8 of \cite{Fel2}:

\begin{lem} Let $x_1,\ldots,x_n$ be real numbers with exactly $k\geq 0$ negative partial sums
$s_{i_1},\ldots,s_{i_k}$, where $i_1>\ldots> i_k$. Write down $x_{i_1},...,x_{i_k}$ followed
by the remaining $x_i$ in their original order. (If $k=0$, the sequence remains unchanged).  The transformation thus defined is invertible, and
the first (absolute) minimum of the partial sums of the new arrangement  
occurs at the $k$-th place.
\end{lem}

Clearly this extends to {\it infinite sequences} with exactly $k$ negative partial sums: just apply the bijection above 
to an initial section large enough to contain all the negative partial sums, and leave the rest unchanged.

The following formulas express 
the generating function of $Z_0$ in terms of $r(z)$ or of $a(z)$, respectively.

\begin{thm}
\begin{align}\label{asc}
\EE z^{Z_0}={1-r(1) \over 1-r(z)}  = {1 \over \mu}{ 1-a(z) \over 1-z}
 =\exp\left\{-\sum_{k=1}^\infty (1-z^k)\frac{\PP(S_k<0)}{k} \right\}. 
\end{align} 
\end{thm}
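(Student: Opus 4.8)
The plan is to establish the three equalities in \eqref{asc} by combining Sparre Andersen's identity (Theorem~\ref{thmSpA}), the duality factorization, and the combinatorial bijection (the Sparre Andersen transformation) extended to infinite sequences. I would treat the chain of equalities in two independent pieces: first the purely analytic identities relating $r(z)$, $a(z)$ and the exponential, and then the probabilistic identification of $\EE z^{Z_0}$ with one of these expressions via the bijection.

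For the analytic part, I would start from the last expression. From Theorem~\ref{thmSpA} we have $1-r(z) = \exp\{-\sum_{n\ge1} \frac{z^n}{n}\PP(S_n<0)\}$, and in particular $1-r(1) = \PP(R=\infty) = \exp\{-\sum_{n\ge1}\frac1n\PP(S_n<0)\}$ (this sum converges precisely because $R$ is either proper, making $r(1)=1$ and both sides zero — a case to handle separately — or defective with a positive value). Dividing, $\frac{1-r(1)}{1-r(z)} = \exp\{-\sum_{k\ge1}(1-z^k)\frac{\PP(S_k<0)}{k}\}$, which is the third equality. For the middle equality, I would invoke the duality theorem $(1-r(z))(1-a(z)) = 1-z$, so $\frac{1-r(1)}{1-r(z)} = \frac{1-r(1)}{1-z}(1-a(z))$; it then remains to identify $1-r(1) = \PP(R=\infty) = 1/\EE(W) = 1/\mu$, which is exactly the consequence of duality stated in the text ($\EE(W)\PP(R=\infty)=1$).

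For the probabilistic part — identifying $\EE z^{Z_0}$ — I would use the Sparre Andersen transformation. Condition on a long initial segment $x_1,\dots,x_N$ containing all negative partial sums of the (a.s.\ eventually determined) trajectory; if the walk has exactly $k$ negative partial sums among all of $S_1,S_2,\dots$, then $Z_0=k$. Applying the bijection to this segment produces a rearrangement whose first absolute minimum of partial sums occurs at position $k$; the rearranged increments are iid with the same law (exchangeability), so $\PP(Z_0=k)$ equals the probability that a fresh walk attains its first minimum at time $k$. Summing $z^k$ over $k$, the generating function of "time of first minimum" factors, by the standard renewal/ladder decomposition, into the product over strict descending ladder steps times the "no further new minimum" tail — which is precisely $\frac{1-r(1)}{1-r(z)}$ when one writes out $\sum_k z^k \PP(\text{first min at }k) = (1-r(1))\sum_{j\ge0} r(z)^j$ using that each descending ladder epoch contributes a factor $r(z)$ and the process stops after the last one with probability $1-r(1)=\PP(R=\infty)$ at each stage.

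The main obstacle I anticipate is making the "infinite sequence" version of the bijection rigorous and matching it cleanly to the ladder-epoch generating function: one must argue that $Z_0<\infty$ almost surely (which forces the positive-drift/transient regime, or more precisely $\sum \frac1n\PP(S_n<0)<\infty$, equivalently $R$ defective), justify that the number of negative partial sums is a.s.\ finite so the truncation at $N$ is harmless, and verify that the transformed sequence's "first minimum at position $k$" event corresponds under the decomposition to exactly $j$ complete strict descending ladder epochs followed by a trajectory that never goes below its current minimum — giving $\sum_{j\ge0} r(z)^j \PP(R=\infty) = \frac{\PP(R=\infty)}{1-r(z)}$. Once that identification is in place, the remaining equalities are immediate from Theorem~\ref{thmSpA} and duality as above. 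The degenerate case where $R$ is proper (so $r(1)=1$, $\mu=\infty$, and $Z_0=\infty$ a.s.) should be noted as the boundary where all expressions vanish consistently.
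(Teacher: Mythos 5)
Your proposal is correct and follows essentially the same route as the paper: the Sparre Andersen rearrangement identifies $\PP(Z_0=k)$ with the probability that the first (absolute) minimum occurs at time $k$, whose generating function is $(1-r(1))/(1-r(z))$, and the remaining equalities follow from Theorem~\ref{thmSpA} and the duality factorization together with $\EE(W)\PP(R=\infty)=1$. The only cosmetic difference is that you sum $(1-r(1))\sum_{j\ge 0}r(z)^j$ over the number of strict descending ladder epochs, whereas the paper splits the rearranged path at the minimum and time-reverses the first piece to get $\PP(S_1<0,\dots,S_k<0)\cdot(1-r(1))$; these are equivalent computations.
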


\begin{proof}
According to Lemma 2.3, each sequence $x_1,x_2\ldots$ with exactly $k$ negative partial sums there corresponds (by a finite reordering) a unique sequence with first (absolute) minimum  at the $k$th
place. The partial sums $s_0=0,s_1,s_2,\ldots$ of the rearranged sequence consist of a first part $s_0,s_1,\ldots,s_k$ 
and a second part $s_{k+1},s_{k+2},\ldots $ such that  the partials sums satisfy $s_i>s_k$ for $i\leq k$ and $s_i-s_k\geq 0$ for $i>k$. 
For a random walk the joint distribution of the $X_i$ is invariant under finite permutations, and the two parts are independent. The first part has probability $$\PP(0>S_k,S_1>S_k,\ldots,S_{k-1}>S_k)=\PP(S_1<0,\ldots,S_k<0)$$ 
(by reversing the order of the variables), the second part has  probability 
$$\PP(S_{k+1}-S_k \geq 0,S_{k+2}-S_k\geq 0,\ldots)=\PP(S_1\geq 0,S_2\geq 0,\ldots)=1-r(1).$$
This yields the first equation of (\ref{asc}). 
The second   
one follows immediately from the factorization identity $(1-a(z))(1-r(z))=1-z$ (recall Theorem 4.2) and the third one from Sparre Andersen's theorem. 
\end{proof}

In some cases $r(z)$ can be computed in closed form, and the asymptotics of $Z_0 $ can be obtained from an explicit formula. 
An example is the normal random walk.
Let the iid steps $X_i$ be $N(\d, \s)$-distributed. Here we only assume  
that $\d \neq 0$, i.e., we consider the cases of positive and negative $\d$ simultaneously and let $d:=|\d|, q:=\frac{\d^2}{2\s}$. 
\begin{exl} For the normal random walk we have\\
(a) $ r(z)=1-(1-z)^{{1 \over 2}}\,\exp \left(\mbox{ sign}(\d)  {d^2 \over \pi\s} {\displaystyle \int_{0}^{1}\int_0^{\infty} \,{ e^{-d^2(y^2+x^2) / {2\s}} \over 1-ze^{-d^2(y^2+x^2) / {2\s}}}}\,dy\,dx \right). $ 

(b) $qZ_0\longrightarrow A$ in distribution as $\d^2/\s\searrow 0,\d \searrow 0$.  

(c) $r(e^{-qs})^{1/\sqrt{q}}\longrightarrow e^{-(\sqrt{1+s} -1)}$ as $q \searrow 0,\d \nearrow 0$.
\end{exl}
Note that here $\s$ may vary with $\d$, it is only essential that $\d/\sigma \longrightarrow 0$.
\begin{proof} 
Directly from Sparre Andersen's theorem we find that 
\begin{align*}
\log\left({1 \over 1-r(z)}\right)&=\sum_{n=1}^\infty {z^n \over n} \PP(S_n<0) 
                                    =\sum_{n=1}^\infty {z^n \over n} \int_{-\infty}^{-n\d} {1 \over \sqrt{2n\pi\s}}\, e^{-x^2 / {2n\s}}\,dx
\\ &                                    =\sum_{n=1}^\infty {z^n \over n}\left({1 \over 2} -   \mbox{sign}(\d)\, \int_{0}^{n d} {1 \over \sqrt{2n\pi\s}}\, e^{-x^2 / {2n\s}}\,dx\right).
\end{align*} 
Hence, 
\begin{align}\label{normal}
 1-r(z)=(1-z)^{{1 \over 2}}\,\exp(\mbox{ sign}(\d) G(z)),
 \end{align} 
where 
$$G(z)=\sum_{n=1}^\infty {z^n \over n}\, \int_{0}^{nd} {1 \over \sqrt{2n\pi\s}}\, e^{{-x^2 / 2n\s}}\,dx.$$
We have 
\begin{align*}
\int_{0}^{nd} {1 \over \sqrt{2n\pi\s}}\, e^{-x^2 / {2n\s}}\,dx&= \int_{0}^{d} \sqrt{ {n \over 2\pi\s}}\, e^{-ny^2 / {2\s}}\,dy
 =
{n \over \pi\s} \int_{0}^{d}\int_0^{\infty} \, e^{-n(y^2+x^2) /{2\s}}\,dy\,dx
\\ 
&= {nd^2 \over \pi\s} \int_{0}^{1}\int_0^{\infty} \, e^{-nd^2(y^2+x^2) / {2\s}}\,dy\,dx 
\end{align*}
and therefore  
$$G(z)={d^2 \over \pi\s} \int_{0}^{1}\int_0^{\infty} \,{ e^{-d^2(y^2+x^2) / {2\s}} \over 1-ze^{-d^2(y^2+x^2) / {2\s}}}\,dy\,dx, $$
proving (a). Note that $G(z)$ depends only on the ratio $q={d^2/ 2\s}$. Fix $s>0$. Setting $z=e^{-qs}$ we obtain for $q\searrow 0$ (by dominated convergence):
\begin{align*} 
G(e^{-qs})&={2\over \pi} \int_{0}^{1}\int_0^{\infty} \,{ q e^{-q(y^2+x^2) } \over 1-e^{-q(s+y^2+x^2) }}\,dy\,dx \\ & 
 \longrightarrow 
{2\over \pi} \int_{0}^{1}\int_0^{\infty} \,{ 1 \over s+y^2+x^2}\,dy\,dx 
=\log \left({1+\sqrt{1+s}\over \sqrt{s}}\right). 
\end{align*}
From this (b) and (c) follow easily. 
\end{proof} 
It is of methodological interest to have also a purely fluctuation-theoretic proof of Theorem 3.4., i.e., a proof which does not rely on the ``functional limit theorem'' approach used above. \black  The reviewer suggested the following alternative derivation of \ref{thmconv} 
based on Theorem 4.4.   Assume the conditions introduced in Section 3.  
\begin{thm} = {\bf Theorem 3.4}\black  
 $${\d^2 \over 2\s(\d)\black}Z_0^\d \lra A \ \mbox{ in distribution} \mbox{ as }\d\searrow 0. $$
\end{thm}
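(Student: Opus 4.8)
The plan is to give a purely fluctuation-theoretic proof using the closed formula \eqref{asc} for the generating function of $Z_0^\d$, namely
$$\EE z^{Z_0^\d}=\exp\left\{-\sum_{k=1}^\infty (1-z^k)\,\frac{\PP(S_k^\d<0)}{k}\right\},$$
and to show that, after the substitution $z=\exp(-\d^2 s/(2\s(\d)))$, the exponent converges to $-(\sqrt{1+s}-1)$, which is $\log\ell_A(s)$ by the discussion following Lemma \ref{lem}. Then $\EE\exp(-\tfrac{\d^2}{2\s(\d)}s\,Z_0^\d)\to \ell_A(s)$ for every $s>0$, and convergence of Laplace transforms on $(0,\infty)$ gives convergence in distribution.

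First I would fix $s>0$, put $q=q(\d)=\d^2/(2\s(\d))$ (so $q\to 0$), and write $z=e^{-qs}$, so that the exponent becomes $-\Psi_\d(s)$ with
$$\Psi_\d(s)=\sum_{k=1}^\infty \frac{1-e^{-qsk}}{k}\,\PP(S_k^\d<0).$$
The heuristic is that the sum is dominated by $k$ of order $1/\d^2$; there, by the central limit theorem applied to the triangular array $Z_{\d,k}=\tfrac{\d}{\sigma(\d)}X_{\d,k}$ established in Section 3, one has $S_k^\d=S_{\d,k}+k\d$ with $\d S_{\d,k}\approx \sigma(\d)\sqrt{q k}\cdot N(0,1)$ times a factor, so that $\PP(S_k^\d<0)=\PP\big(\tfrac{S_{\d,k}}{\sigma(\d)\sqrt{k}}<-\sqrt{k}\,\tfrac{\d}{\sigma(\d)}\big)\to \Phi(-\sqrt{2qk}\cdot c)$ for the appropriate constant; more precisely, with $t=qk$ held fixed as $\d\to0$, $\PP(S_k^\d<0)\to\Phi(-\sqrt{2t})$. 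Replacing the sum $\sum_k \tfrac{1-e^{-qsk}}{k}(\cdots)$ by the Riemann integral $\int_0^\infty \tfrac{1-e^{-st}}{t}\,\Phi(-\sqrt{2t})\,dt$ (the factor $q$ from $dk\mapsto q^{-1}dt$ cancels against the $1/k=q/t$), one is reduced to the elementary identity
$$\int_0^\infty \frac{1-e^{-st}}{t}\,\Phi(-\sqrt{2t})\,dt=\sqrt{1+s}-1,$$
which can be checked by differentiating in $s$ and using the Gaussian integral, or by recognizing it as the exponent in Example 4.7 with the substitution to polar-type coordinates already carried out there.

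The main obstacle will be making the passage from the sum to the integral rigorous, i.e. justifying that $\Psi_\d(s)\to\sqrt{1+s}-1$ uniformly enough. This requires two things: (i) a local-CLT-free estimate showing $\PP(S_k^\d<0)\to\Phi(-\sqrt{2qk})$ suitably uniformly in the relevant range of $k$ — here the Lindeberg condition \eqref{lind} and the Berry–Esseen-type or just Lindeberg-CLT convergence of the array must be upgraded to control over a continuum of indices; and (ii) control of the tails: for small $k$ (i.e. $qk$ near $0$) one uses $1-e^{-qsk}\le qsk$ so those terms contribute $O(q)\sum_{k\le K}\PP(S_k^\d<0)/k\cdot k \to 0$ after the right bookkeeping, while for large $k$ (i.e. $qk\to\infty$) the bound $\PP(S_k^\d<0)\le \PP(|S_{\d,k}|\ge k\d)\le \s(\d)/(k\d^2)=\tfrac{1}{2qk}$ makes $\tfrac{1}{k}\PP(S_k^\d<0)\le \tfrac{q}{2(qk)^2}\cdot\tfrac1q$ summable with a tail that vanishes — this is exactly the kind of Kolmogorov-inequality bound already exploited in Lemma 3.3. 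I would package (ii) as a dominated-convergence statement for the measures $\sum_k \tfrac1k\delta_{qk}$ weighted appropriately, and (i) as: for any $0<\a<\b<\infty$, $\sup_{\a\le qk\le\b}|\PP(S_k^\d<0)-\Phi(-\sqrt{2qk})|\to 0$, which follows from the Lindeberg CLT applied along the subsequences $k=k(\d)$ with $qk(\d)\to t$. Combining (i) and (ii) yields $\Psi_\d(s)\to\sqrt{1+s}-1$, hence the Laplace transform of $\tfrac{\d^2}{2\s(\d)}Z_0^\d$ converges to $2/(1+\sqrt{1+s})$, which is $\ell_A(s)$, completing the proof.
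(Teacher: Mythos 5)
Your strategy is exactly the paper's own fluctuation-theoretic proof of this theorem (the one given in Section 4): start from \eqref{asc}, substitute $z=e^{-qs}$ with $q=\d^2/(2\s(\d))$, dispose of the ranges $qk\le\eps$ (via $1-z^k\le qsk$) and $qk\ge T$ (via Chebyshev, $\PP(S_k^\d<0)\le \s(\d)/(k\d^2)$), apply the Lindeberg CLT uniformly for $qk$ in a compact interval, and pass to the integral $\int_0^\infty\frac{1-e^{-st}}{t}\Phi(-\sqrt{2t})\,dt$ by dominated convergence. All of that matches the paper's argument step for step and at the same level of rigor.

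There is, however, a concrete error at the end: the integral does \emph{not} equal $\sqrt{1+s}-1$, and $\log\ell_A(s)$ is \emph{not} $-(\sqrt{1+s}-1)$. Since $\ell_A(s)=2/(1+\sqrt{1+s})$, what you need (and what is true) is
\begin{equation*}
\int_0^\infty \frac{1-e^{-st}}{t}\,\Phi\bigl(-\sqrt{2t}\bigr)\,dt=\log\frac{1+\sqrt{1+s}}{2},
\end{equation*}
which one checks by differentiating in $s$: using $\Phi(-\sqrt{2t})=\tfrac12\PP(G>t)$ for $G\sim\Gamma_{1,\frac12}$ one gets $\int_0^\infty e^{-st}\Phi(-\sqrt{2t})\,dt=\frac{1}{2s}\bigl(1-\frac{1}{\sqrt{1+s}}\bigr)=\frac{1}{2\sqrt{1+s}\,(1+\sqrt{1+s})}=\frac{d}{ds}\log\frac{1+\sqrt{1+s}}{2}$, with both sides vanishing at $s=0$. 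The quantity $\sqrt{1+s}-1$ you quote is the limiting Laplace exponent attached to the \emph{ladder epoch} generating function $r$ (part (c) of the normal-random-walk example), not to $Z_0$; with your claimed value the limiting Laplace transform would be $e^{-(\sqrt{1+s}-1)}$, which contradicts your own closing sentence asserting the limit $2/(1+\sqrt{1+s})$. Once the integral is evaluated correctly -- or, as the paper does, identified without computation by comparing with the explicitly solvable normal random walk -- the proof closes; everything else in your outline is sound.
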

\begin{proof}
In principle, we follow the line of argument used for a similar proof in \cite{Shnwa}. Let $\eps>0$ and split the series
in the exponent of the right-hand side of \eqref{asc} into three parts:
$$\sum_{k=1}^\infty =\sum_{k=1}^{\eps/\d^2}\; + \; \sum_{\eps/\d^2}^{T/\d^2}\; + \;\sum_{T/\d^2}^\infty=\sum\nolimits_1 +\sum\nolimits_2 +\sum\nolimits_3\;.$$
Let $s>0$ and set $$z=e^{-s\d^2/2\s(\d)}.$$ 
We consider the different sums separately, starting with  $\sum\nolimits_1$:
$$\sum_{k=0}^{\eps/\d^2} (1-z^k) \frac{\PP(S^\d_k<0)}{k}\leq \frac{s\d^2}{2\s(\d)} \sum_{k=0}^{\eps/\d^2} \PP(S^\d_k<0)\leq \frac{s\eps}{2\s(\d)}.$$
Furthermore, $\PP(S_k^\d<0)= \PP(\sum_{j=1}^k X_{\d,j}<-k\d)\leq \s(\d)/(k\d^2)$ by Chebyshev's inequality.
 Therefore we  obtain for $\eps>\d^2$ 
$$\sum_{k\geq \eps/\d^2}\frac{(1-z^k)}{k}\PP(S^\d_k<0) \leq \frac{\s(\d)}{\d^2}\sum_{k\geq \eps/\d^2}\frac{1}{k^2} \leq\frac{\s(\d)}{\d^2} \int_{\eps/\d^2}^\infty \frac{1}{(x-1)^2}\,dx=\frac{\s(\d)}{\eps-\d^2}.$$
Since $\sigma(\d)\lra \s\in(0,\infty)$ as $\d\lra 0$, there is a $\d_0$ such that $2\d_0^2<\eps$ and $\s(\d)$ is bounded for $\d\leq \d_0$. 
Without loss of generality assume in the sequel $\d\leq\d_0$. 
Then $\sum\nolimits_3$ can be made arbitrarily small by a suitable choice of $T$, and $\sum\nolimits_2\leq 2C/\eps$ for a suitable constant $C$. 

For $\sum\nolimits_2$ we use the asymptotic normality of $\d S^\d_{t/\d^2}$ (which is implied by the Lindeberg condition, see  the beginning of section 3)\black:
$$\PP(\d S_k^d<0) \lra \PP(N(t,\s t)<0)=\Phi(-\sqrt{\frac{t}{\s}})\; \mbox{ as }\; \d \lra 0,\,k\d^2\lra t$$
(uniformly for $t\in [\eps,T]$), and by the dominated convergence we conclude that 
 $$\sum\nolimits_2 \lra \int_\eps^T \frac{1-e^{-t/2\s}}{t}\, \Phi(-\sqrt{\frac{t}{\s}})\,dt.$$
 Letting $\eps\lra 0, T\lra \infty$ we finally arrive at
 \begin{align} 
\label{domi} 
\EE e^{-s\frac{\d^2}{2\s}}\lra \exp\{-\int_0^\infty \frac{1-e^{-su}}{u}\,\Phi(-\sqrt{2u})\,du\}.
\end{align} 
Evaluating the integral finishes the proof. Avoiding the calculation, it suffices to notice that the right side of (\ref{domi})  
is independent of the underlying 
distribution of the random walk so that one can look at the example of the normal random walk computed above, which leads to the 
conclusion that the right side of (\ref{domi})is equal to $2/(1+\sqrt{1+s})$. 
\end{proof}  

The advantage of this proof is that it generalizes to the $\a$-stable case ($1<\a<2$) essentially unchanged - the main difficulties (the corresponding  estimates
for these cases) can be overcome using inequality $(6)$ in \cite{Shnwa}. 

 We close this section with a few remarks on the simple random walk \black
taking step +1 with probability $p>1/2$ and step -1 with  probability $q=1-p$. It is well-known that in this example 
$$r(z)={1 -\sqrt{1-4pqz^2} \over 2pz},$$ 
so that a quick calculation shows that 
$$ \EE z^{Z_0}={1-r(1) \over 1-r(z)} \black
= \dfrac{(p-q)(1+\sqrt{ 1-4pqz^2})}{p(1-2z^2 +\sqrt{ 1-4pqz^2})}$$  

and  
$2(p-\frac{1}{2})^2Z_0 \longrightarrow A$ in distribution as $p\searrow 1/2$.

\begin{rem}  
 Let $T_0(p)=\sup\{n\ge 0\,:\,S_n^{(0)}=0\}$ the time of the last return to the origin. 
In the symmetric case $p=1/2$  the walk is persistent and  $T_0(1/2)=\infty$ almost surely.  In the transient case $p>1/2$,  
 $T_0(p)$ has generating function 
$$h(z)={ p-q \over \sqrt{1-4pqz^2}}.$$
A short computation yields that ${1 \over 2}(p-q)^2T_0(p)$ converges in distribution as $p\searrow 1/2$, the limiting distribution 	
having Laplace transform $\dfrac{1}{\sqrt{ 1+s}}$, i.e., being the $\Gamma_{1,{1 \over 2}}$ distribution with density $\gamma_{1,{1 \over 2}}(t)$ as above.
\qed 
\end{rem}
  \begin{rem}     
Let $N_0(p)$ denote the number of zeros of the random walk. Then
$$\PP(N_0(p)=r,\,T_0(p)=2n)={r \over n-r}{2n- r \choose n} 2\,(pq)^n$$
and $(\delta N_0(p),{1 \over 2}\delta^2 T_0(p))$ converges weakly to the distribution with density
$$f(y,t)=1_{(0,\infty)}(y)\,1_{(0,\infty)}(t)\,{y \over 2t}{1 \over \sqrt{2\pi t}}\, e^{-({y^2 / 4t})\,-t}\;\;.$$
In particular,  $\delta N_0(p)$ is asymptotically $\exp(1)$. 
 For the symmetric random walk let $N_{0, 2n}$ denote the number of zeros up to time $2n$. A classical theorem of Chung-Hunt \cite{ChH} states that 
$\sqrt{{2 / n}} N_{0, 2n} $  is asymptotically distributed as $
|N(0,1)|$. All these results show that deviations from the symmetric random walk become clearly visible after $n\approx \delta^{-2}$ steps. 
 While characteristics like the positive sojourn time and the last exit time from zero are in both cases of approximately the same size their distributions differ. 
For the last exit time from zero a precise description is given in Theorem 2.3.  \black. 
\qed 
\end{rem}  

Apparently the distribution of $A$ occurs naturally as a limit of occupation times for random walks with drift.
It is well-known (see e.g. Section XIV.3 in \cite{Fel2}) that the deeper reason for the frequent occurrence of the (generalized) arcsine distributions 
lies in their intimate connection to distribution functions with regularly varying tails. The same explanation applies here.
 In the case of drift zero the distribution functions of the ladder epochs are attracted to the standard positive stable distribution
of index 1/2 and the positive (negative) sojourn times are asymptotically arcsine-distributed. In the cases with small drift (and finite variance) the ladder epochs are
attracted to an associated distribution of this stable distribution, and therefore the positive (negative) sojourn times have asymptotically the distribution of $A$. 

{\bf Acknowledgement}. We would like to thank the referee for valuable remarks and in particular for suggesting the alternative proof
of Theorem \ref{thmconv} given in Section 4.
\black

\end{document}